 \DeclareMathOperator{\perm}{Sym}
 \DeclareMathOperator{\soc}{soc}
\DeclareMathOperator{\aut}{Aut} \DeclareMathOperator{\out}{Out}
 \DeclareMathOperator{\frat}{Frat}
\DeclareMathOperator{\sym}{Sym}
 \DeclareMathOperator{\psl}{PSL}
\DeclareMathOperator{\End}{End}
\newtheorem{thm}{Theorem}
\newtheorem{cor}[thm]{Corollary}
\newtheorem{prop}[thm]{Proposition}
\numberwithin{equation}{section}
\renewcommand{\footnote}{\endnote}
\newcommand{\ignore}[1]{}\makeglossary
\begin{document}
	\bibliographystyle{amsplain}
	\title[Independent generating sets]{Finite groups in which there are only two possible cardinalities for an independent generating set}

\author[A. Lucchini]{Andrea Lucchini}
\address{Andrea Lucchini, Dipartimento di Matematica \lq\lq Tullio Levi-Civita\rq\rq,\newline
 University of Padova, Via Trieste 53, 35121 Padova, Italy} 
\email{lucchini@math.unipd.it}
         
\author[P. Spiga]{Pablo Spiga}
\address{Pablo Spiga, Dipartimento di Matematica Pura e Applicata,\newline
 University of Milano-Bicocca, Via Cozzi 55, 20126 Milano, Italy} 
\email{pablo.spiga@unimib.it}
\subjclass[2010]{primary 20D99, 20F05}


\begin{abstract}
 A generating set $S$ for a group $G$ is  independent if the subgroup generated by $S\setminus \{s\}$ is properly contained in $G$,  for all $s \in S.$ In this paper, we study a problem proposed by Peter Glasby: we investigate finite groups, where there are only two possible cardinalities for the independet generating sets.
\end{abstract}
	\maketitle

\section{Introduction}

The \textbf{\textit{minimal number of generators}} of a finite group $G$ is denoted  by $d(G).$  A generating set $S$ for a group $G$ is  \textbf{\textit{independent}} (sometimes
called \textbf{\textit{irredundant}}) if $$\langle S\setminus \{s\}\rangle < G,$$  for all $s \in S.$ Let $m(G)$ denote the \textit{\textbf{maximal size of an independent
generating set}} for $G.$

  The finite groups with $m(G)=d(G)$ are classified by Apisa and Klopsch:
\begin{thm}[{{Apisa-Klopsch,~\cite[Theorem 1.6]{ak}}}]\label{apisa} If $d(G)=m(G)$, then $G$ is soluble. Moreover, either 
\begin{itemize}
\item $G/\frat(G)$ is an elementary abelian $p$-group for some prime $p$, or 
\item $G/\frat(G)=PQ,$ where $P$ is an elementary abelian $p$-group and $Q$ is a non-trivial cyclic $q$-group, for distinct primes $p\neq q,$ 
such that $Q$ acts by conjugation faithfully on $P$ 
and $P$ (viewed as a module for $Q$) is a direct sum of $m(G)-1$ isomorphic copies of one simple $Q$-module.
\end{itemize}
\end{thm}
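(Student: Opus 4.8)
The plan is to work throughout with $\overline{G}=G/\frat(G)$, after observing that passing to this quotient changes nothing relevant. The projection $G\to\overline{G}$ carries an independent generating set $S$ of $G$ to one of $\overline{G}$ of the same cardinality: two elements of $S$ cannot collapse modulo $\frat(G)$ (else one would lie in $\langle S\setminus\{s\}\rangle\,\frat(G)=G$, contradicting independence), and properness of $\langle S\setminus\{s\}\rangle$ survives since $\frat(G)$ consists of non‑generators. Conversely any independent generating set of $\overline{G}$ lifts, by the non‑generator property, to one of $G$ of equal size. Hence $d(G)=d(\overline{G})$ and $m(G)=m(\overline{G})$, and the hypothesis $d(G)=m(G)$ says exactly that \emph{every} independent generating set of $\overline{G}$ is equicardinal; so I may assume $\frat(G)=1$. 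In that case $F:=\fit(G)$ is a direct product of minimal normal subgroups, each elementary abelian and irreducible as a $G$-module, and once solubility is known $C_G(F)=F$. The single‑prime conclusion in the nilpotent case is then immediate: if $\overline{G}$ were abelian but divisible by two primes $p\neq q$, the union of an $\FF_p$-basis of the $p$-part and an $\FF_q$-basis of the $q$-part is an independent generating set of size the \emph{sum} of the two ranks, while a minimal one has size their \emph{maximum}, violating equicardinality.

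The main obstacle is proving that $d(G)=m(G)$ forces $G$ soluble, and here I expect to need the classification of finite simple groups. The strategy is contrapositive: an insoluble $G$ has a nonabelian composition factor, and such a factor always produces independent generating sets of two different sizes. The base case is a nonabelian simple $T$: by CFSG every such $T$ is $2$-generated, so $d(T)=2$, whereas one knows (again via CFSG, through explicit irredundant sequences in each family, e.g.\ three suitable involutions in $\alt(5)$) that $m(T)\ge 3$; thus $d(T)<m(T)$. I would then propagate this strict inequality upward along a chief series of the Frattini‑free $G$, using the faithful action on the socle to realise the local ``slack'' $m>d$ of a nonabelian chief factor as a genuinely longer global independent generating set, again contradicting $d(G)=m(G)$. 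Making this propagation uniform across all simple types is the delicate, CFSG‑dependent heart of the whole argument.

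With solubility and $\frat(G)=1$ in hand, I would pin down the structure by exhibiting independent generating sets of two distinct cardinalities whenever $G$ deviates from the asserted form, thereby forcing that form. Write $F=\fit(G)=C_G(F)$, so $G/F$ acts faithfully and completely reducibly on $F=\prod_i V_i$ with each $V_i$ irreducible. The engine is the remark that an \emph{irreducible} faithful factor behaves like a single coordinate—the $G$-span of one nonzero vector already fills it, so it contributes exactly one element to any independent generating set—while an \emph{isotypic} layer $V^{k}$ under a cyclic prime‑power complement behaves like a $k$-dimensional space over the field $\End_{G}(V)$, in which all bases, hence all independent generating contributions, have the same size $k$. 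By contrast, two non‑isomorphic irreducible factors, or a complement that is not cyclic of prime‑power order, or a non‑faithful action (which yields a central direct factor that may be either absorbed into an existing generator or adjoined separately), each produces a pair of independent generating sets of different sizes. Ruling these out in turn forces $F=P$ to be a single isotypic module $V^{\,m(G)-1}$ for one prime $p$ and $G/F=Q$ to be cyclic of prime‑power order $q\neq p$ acting faithfully on $P$—precisely the stated dichotomy, the nilpotent case giving the elementary abelian alternative. The cleanest packaging of this last step is to prove an exchange property—that $d(G)=m(G)$ is equivalent to the independent subsets forming a matroid—and read the structure theorem off from which soluble groups enjoy it.
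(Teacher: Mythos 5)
A preliminary but important observation: the paper does not prove this statement at all. It is quoted verbatim from Apisa--Klopsch \cite[Theorem 1.6]{ak} and used as a black box, so there is no internal proof to compare yours against; your attempt can only be judged on its own merits. On those merits it is a strategy outline with two genuine gaps, not a proof. The Frattini reduction and the two-prime abelian observation at the start are correct. But the solubility step --- which you yourself call ``the delicate, CFSG-dependent heart of the whole argument'' --- is precisely where the theorem lives, and you do not supply it. The difficulty is that neither $d$ nor $m$ propagates along a chief series in the naive way your sketch assumes: $m(G)\geq m(G/N)+m(N)$ is false in general, $d$ can drop when passing to quotients, and for $G$ itself simple a chief-series count gives nothing (one non-Frattini and one non-abelian factor only forces $m\geq 2=d$, so one still needs the separate fact $m(S)\geq 3$, proved as Corollary~\ref{coro} here via the two-involutions/dihedral argument). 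What makes the ``slack propagates upward'' idea rigorous in the literature is a real theorem, for instance the inequality $m(G)\geq a+b$ of Theorem~\ref{bound} (with $a,b$ the numbers of non-Frattini and non-abelian chief factors, from \cite{ap}), combined with a bound of the form $d(G)\leq\max(2,a)$; nothing of this kind is stated, let alone proved, in your proposal.

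The soluble structure step has the same character: the ``engine'' claims are exactly what needs proof, and at least one is too naive as stated. Elements of an independent generating set of $V^{k}\rtimes H$ need not lie in $V^{k}$ nor in a fixed complement --- already in $S_3=C_3\rtimes C_2$ the independent generating set consisting of two transpositions meets neither $C_3$ nor any single complement --- so the assertion that an irreducible factor ``contributes exactly one element to any independent generating set'' has no meaning without a normalization argument allowing one to replace generators by ones adapted to the decomposition. Likewise, the claims that two non-isomorphic irreducible factors, or a non-cyclic complement, or a non-faithful action, each produce independent generating sets of two sizes are plausible but unargued; the standard rigorous route goes through the equality $m(G)=a$ for soluble $G$ (the second half of Theorem~\ref{bound}) together with a Gasch\"utz-type formula for $d(G)$ of a Frattini-free soluble group, after which $d(G)=m(G)$ becomes an arithmetic condition on the crowns that yields the stated dichotomy. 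The closing suggestion to prove an exchange/matroid property is again only a suggestion. In short: the reductions are sound, but both pillars of the theorem --- solubility, and the classification in the soluble case --- are missing from the argument.
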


In view of the result of Apisa and Klopsch, in a recent paper, Glasby proposed to classify the finite groups $G$ with $$m(G) - d(G)\leq 1,$$ see~\cite[Problem 2.3]{sg}. Clearly, Theorem~\ref{apisa} gives a classification when $m(G)-d(G)=0$ and hence, the relevant case is when $m(G)-d(G)=1$.

 A nice result in universal algebra, due to Tarski and known with the name of  \textit{\textbf{Tarski irredundant basis theorem}} (see for example \cite[Theorem 4.4]{bs}), implies that, for every positive integer $k$ with $d(G)\leq k\leq m(G),$ $G$ contains an independent generating set of cardinality $k.$ So the condition  $m(G) - d(G)= 1$ is equivalent to the fact that there are only two possible cardinalities for an independent generating set of $G.$

Let $G$ be a finite group. We recall that the \textbf{\textit{socle}} of $G$ is the subgroup generated by the minimal normal subgroups of $G$; moreover,  $G$ is said to be \textit{\textbf{monolithic primitive}} if $G$ has a unique minimal normal subgroup and the Frattini subgroup $\frat(G)$ of $G$ is the identity. In this paper we prove the following two main results. 

\begin{thm}\label{uno}
	Let $G$ be a finite group with $\frat(G)=1$ and $m(G)=d(G)+1.$ If $G$ is not soluble, then $d(G)=2$, $G$ is a monolithic  primitive group and $G/\soc(G)$ is cyclic of prime power order.
\end{thm}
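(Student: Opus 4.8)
The plan is to treat $m(G)=d(G)+1$ as the extremal case of the inequality forced by insolubility: since $d(G)=m(G)$ implies $G$ soluble by Theorem~\ref{apisa}, an insoluble $G$ always satisfies $m(G)\ge d(G)+1$, and our hypothesis asserts this is tight. I would then show that any structural feature beyond the one described in the statement --- more than one minimal normal subgroup, a top group $G/\soc(G)$ that is not cyclic, a cyclic top group whose order is divisible by two primes, or $d(G)\ge 3$ --- allows one to exhibit an independent generating set of cardinality at least $d(G)+2$, contradicting the hypothesis. Two elementary facts drive the bookkeeping: by Tarski's theorem it suffices to control the two extreme values $d(G)$ and $m(G)$; and for a direct product one has $m(A\times B)\ge m(A)+m(B)$ (juxtapose independent generating sets of the factors) together with $d(A\times B)\le d(A)+d(B)$, whence the gap is superadditive, $m(A\times B)-d(A\times B)\ge (m(A)-d(A))+(m(B)-d(B))$.

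\textbf{Reduction to a monolithic group.} Using $\frat(G)=1$, write $\soc(G)$ as a direct product of minimal normal subgroups. I would first rule out the existence of two distinct minimal normal subgroups. If $N$ is a non-abelian minimal normal subgroup with centraliser $C=C_G(N)$, then $N\cap C=\z(N)=1$, so $G$ embeds subdirectly in $G/N\times G/C$, and $G/C$ is the insoluble monolithic primitive quotient with socle isomorphic to $N$; by Theorem~\ref{apisa} this quotient already carries a gap of at least $1$. The heart of this step is to promote these observations into the inequality that a second minimal normal subgroup, or an abelian minimal normal subgroup accompanying the non-abelian one, strictly raises the gap, forcing $m(G)-d(G)\ge 2$. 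Since $G$ is insoluble, the resulting unique minimal normal subgroup $N$ must be non-abelian (an abelian socle together with the cyclic top group obtained below would make $G$ soluble), and as $\frat(G)=1$ this makes $G$ monolithic primitive with $\soc(G)=N\cong T^k$ for a non-abelian simple group $T$.

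\textbf{The top group and $d(G)=2$.} Writing $Q=G/\soc(G)$, I would show $Q$ is cyclic of prime power order. If $Q$ is not cyclic then $d(Q)\ge 2$, and the non-abelian socle supplies an independent generator beyond those lifting a minimal generating set of $Q$, giving $m(G)\ge d(G)+2$. If $Q$ is cyclic but $|Q|$ is divisible by two distinct primes, then $Q=Q_1\times Q_2$ is a nontrivial direct product of coprime cyclic groups; I would exploit this splitting, together with the action on the $k$ coordinates of $N$, to produce two independent generators above $d(G)$, again contradicting the hypothesis. Finally $d(G)=2$: one has $d(G)\ge 2$ because an insoluble group is non-cyclic, while the now very restricted structure (non-abelian socle $T^k$ with a cyclic top of prime power order acting transitively on the $k$ factors) is $2$-generated by the standard generation results for monolithic primitive groups with a single non-abelian chief factor.

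\textbf{Main obstacle.} The genuinely hard part is the second and third steps: making precise, and proving, the lower bounds for $m(G)$ in the subdirect (possibly non-split) situation, that is, quantifying exactly how much the non-abelian socle and the top group each contribute to $m(G)-d(G)$. The clean superadditivity above applies only to honest direct products, whereas $\frat(G)=1$ yields merely a subdirect decomposition; controlling the potential collapse of $d(G)$ arising from nonzero first cohomology of $N$, while simultaneously keeping $m(G)$ large enough to detect the extra independent generators, is the delicate point on which the whole argument turns.
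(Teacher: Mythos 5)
Your proposal is a plan rather than a proof: the two steps you yourself single out as ``the heart'' and ``the genuinely hard part'' --- forcing monolithicity and controlling the top group --- are precisely where the theorem lives, and the tools you set up cannot deliver them. Superadditivity of the gap $m-d$ holds for honest direct products, but $\frat(G)=1$ only gives a subdirect embedding $G\leq G/N\times G/\mathbf{C}_G(N)$, and for subdirect products superadditivity fails badly (a diagonal copy of $A$ in $A\times A$ has $m=m(A)$, not $2m(A)$); so the claim that a second minimal normal subgroup forces $m(G)-d(G)\geq 2$ remains entirely unproved. Your reduction is also circular at one point: you rule out an abelian unique minimal normal subgroup by invoking ``the cyclic top group obtained below'', but that later step presupposes a non-abelian socle $T^k$, so the case of an insoluble $G$ all of whose minimal normal subgroups are abelian (e.g.\ $G=V\rtimes H$ with $H$ insoluble acting faithfully and irreducibly on an abelian group $V$) is never handled. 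Finally, the top-group step has an arithmetic gap: if $Q=G/\soc(G)$ is non-cyclic then $d(G)=d(Q)$ by \cite{uni}, and lifting a minimal generating set of $Q$ and appending \emph{one} independent socle element gives at best $m(G)\geq d(G)+1$, which is perfectly consistent with the hypothesis; you need the socle to contribute \emph{two} independent elements (the same deficit occurs in your two-prime cyclic case). That a non-abelian non-Frattini chief factor contributes $2$ to $m(G)$ is exactly Theorem~\ref{bound} (\cite[Theorem 1.3]{ap}), a genuine theorem which you neither prove nor cite.

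It is worth contrasting this with the paper's architecture, which is quite different: monolithicity comes out at the \emph{end}, not the beginning. One takes a generating chief factor $A$ in the sense of crown-based powers \cite{dv-l} (necessarily non-abelian when $G$ is insoluble, since the abelian case leads to the soluble structures of Theorem~\ref{thrm:sol}), kills the possibility $\delta_G(A)\geq 2$ by the counting estimate of Proposition~\ref{nonabe} (which rests on probabilistic generation bounds and $H^1$ estimates, not on juxtaposing generating sets), deduces $d(G)=d(L_G(A))=2$ and hence $m(G)=3$ from \cite{uni}, and then concludes $G\cong L_G(A)$ from the equality $m(G)=m(L_G(A))=3$ via Corollary~\ref{coro} and \cite[Lemma~11]{min}, the latter converting $m(G)=m(G/R)$ into $R\leq\frat(G)=1$. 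The cyclic prime-power structure of $G/\soc(G)$ then falls out of Theorem~\ref{bound} applied to $L_G(A)$. Each of these quantitative inputs (Proposition~\ref{nonabe}, Theorem~\ref{bound}, \cite[Lemma~11]{min}) is exactly the kind of lower bound on $m$ in a non-split or subdirect situation that you correctly identify as the crux but leave open; without some substitute for them, your outline cannot be completed.
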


It was proved by Whiston and Saxl~\cite{sawhi} that $m(\psl(2,p))=3,$ for any prime $p$ with $p$ not congruent to $\pm 1$ modulo 8 or 10. In particular, as $d(S)=2$ for every non-abelian simple group, we deduce  there are infinitely many non-abelian simple groups $G$ with $m(G)=d(G)+1$. We also give examples of non-simple groups $G$ having $m(G)=d(G)+1$ in Section~\ref{sec:mon}.

\begin{thm}\label{thrm:sol}
	Let $G$ be a finite group with $\frat(G)=1$ and  $m(G)=d(G)+1.$ If $G$ is  soluble,	then one of the following occurs:
	\begin{enumerate}
		\item\label{sol1} $G\cong V\rtimes P$,
		where $P$ is a finite non-cyclic $p$-group and $V$ is an irreducible  $P$-module, which is not a $p$-group; in this case $d(G)=d(P)$;
		\item\label{sol2} $G\cong V^t \rtimes H,$ where $V$ is a faithful irreducible $H$-module, $m(H)=2$ and either $t=1$ or $H$ is abelian; in this case $d(G)=t+1$;
		\item\label{sol3} there exist two normal subgroups $N_1$, $N_2$  such that $1 \lneq N_1 \leq N_2,$ $N_1$ is an abelian minimal normal subgroup of $G,$ $N_2/N_1\leq \frat(G/N_1)$ and
		$G/N_2\cong V^t\rtimes H$, where $V$ is an irreducible $H$-module and $H$ is a non-trivial cyclic group of prime power order; in this case $d(G)=t+1.$
	\end{enumerate}
\end{thm}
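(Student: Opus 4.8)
My plan is to argue by induction on $|G|$, peeling off one minimal normal subgroup at a time and comparing the two generation invariants of $G$ with those of the quotient. Since $\frat(G)=1$ and $G$ is soluble, every minimal normal subgroup $N$ is an elementary abelian group affording an irreducible $G$-module and, by Gasch\"utz's theorem, is complemented in $G$; fix such an $N$ and set $\overline G=G/N$. The control on $d$ comes from the standard crown machinery for finite soluble groups, which yields $d(\overline G)\le d(G)\le d(\overline G)+1$ together with a cohomological criterion (in terms of the number of complemented chief factors $G$-isomorphic to $N$ and of a derivation/first-cohomology group) deciding when the jump occurs. The companion statement for $m$ is the heart of the reduction: I would prove that $m(G)=m(\overline G)+1$ whenever $N$ is a complemented minimal normal subgroup. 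The inequality $m(G)\ge m(\overline G)+1$ is easy---lift an independent generating set of $\overline G$ into a complement $K\cong\overline G$ of $N$ and adjoin any nontrivial $n\in N$, which stays independent because $\langle n^{G}\rangle=N$ by irreducibility---while the reverse inequality $m(G)\le m(\overline G)+1$ is where $G$-irreducibility of $N$ is used in an essential way.

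Granting these comparisons, the hypothesis $m(G)=d(G)+1$ propagates downward very rigidly. Indeed $m(\overline G)=d(G)$, and since $d(\overline G)\in\{d(G)-1,d(G)\}$ we get $m(\overline G)-d(\overline G)\in\{0,1\}$: either $\overline G$ satisfies $m(\overline G)=d(\overline G)$, in which case Theorem~\ref{apisa} pins down its structure, or $\overline G$ again satisfies $m(\overline G)=d(\overline G)+1$ and the inductive hypothesis applies. The one complication is that $\frat(\overline G)$ need not be trivial, so the inductive statement must be formulated allowing a Frattini residue; this residue is exactly what is recorded by the subgroup $N_2/N_1\le\frat(G/N_1)$ in conclusion~\ref{sol3}. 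I would therefore run the induction remembering, at each stage, the factor discarded when replacing a quotient by its Frattini quotient.

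The three conclusions then emerge from a case analysis according to the nature of $N$ and of $\overline G$. If $G$ acts trivially on some minimal normal subgroup, that subgroup is a central $C_p$; I would take it to be the $N_1$ of~\ref{sol3}, show via the comparison that $G/N_1$ lies in the zero-gap class of Theorem~\ref{apisa} modulo its Frattini subgroup, and reassemble $G$ to obtain a cyclic quotient $H$ of prime power order acting on $V^{t}$, which is case~\ref{sol3}. If every minimal normal subgroup is non-central, the slack $m(G)-d(G)=1$ forces the socle to be concentrated in essentially one homogeneous component: either $G/\soc(G)$ is a non-cyclic $p$-group $P$ acting irreducibly on a module $V$ of different characteristic, giving $G\cong V\rtimes P$ with $d(G)=d(P)$ as in case~\ref{sol1}, or $G$ is a crown-based power $V^{t}\rtimes H$ over a single primitive quotient, giving case~\ref{sol2}. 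The constraints $m(H)=2$ and the dichotomy \lq\lq $t=1$ or $H$ abelian\rq\rq\ come out of a finer count of how many independent elements the diagonal socle $V^{t}$ can support once $H$ is fixed.

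The main obstacle is the upper bound $m(G)\le m(\overline G)+1$ in the comparison lemma. Lower bounds on $m$ are constructive and routine, but bounding $m$ from above means ruling out the existence of any independent generating set of size $m(\overline G)+2$, and $m$ enjoys neither a closed formula nor clean monotonicity under quotients. The argument must exploit that $N$ is a single irreducible $G$-module---so that at most one element of any independent set can be \lq\lq charged\rq\rq\ to $N$---and combine this with the crown-theoretic control of $d$; making this precise, uniformly across the central and non-central cases and in the presence of a nontrivial Frattini residue, is the technical crux on which the whole classification rests.
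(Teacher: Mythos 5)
Your proposal has two genuine gaps. The first is the step you yourself flag as the crux: the upper bound $m(G)\le m(G/N)+1$ for a complemented minimal normal subgroup $N$ is never proved, only declared to be ``the technical crux on which the whole classification rests.'' Moreover, your stated reason for its difficulty --- that $m$ enjoys neither a closed formula nor clean monotonicity under quotients --- is false in exactly the setting of this theorem: for soluble groups, Theorem~\ref{bound} \emph{is} a closed formula, $m(G)=a$, the number of non-Frattini chief factors in a chief series. Since a chief factor above $N$ is Frattini in $G$ if and only if it is Frattini in $G/N$, both directions of your comparison lemma (the lower bound you prove and the upper bound you defer) follow at once from that citation. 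The paper leans on precisely this equality; by not recognizing it, you have left your induction without its engine.

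The second gap is more serious: even granting both comparison lemmas, your final case analysis restates the conclusions rather than deriving them. Nothing in your sketch produces the exact numerical content of the theorem --- why $d(G)=d(P)$ in~\eqref{sol1}; why $d(G)=t+1$, $m(H)=2$ and ``either $t=1$ or $H$ abelian'' in~\eqref{sol2}; why $H$ is cyclic of prime power order in~\eqref{sol3}; and why hybrid configurations are impossible. These facts come from a quantitative link between $d(G)$ and cohomology that you only gesture at. The paper's (non-inductive) argument takes a generating chief factor $A$, writes $d(G)=h_G(A)$ by Proposition~\ref{acca}, combines this with $m(G)=a$ and with the Aschbacher--Guralnick bound $t_G(A)<r_G(A)$ via~\eqref{piuuno} to force $\delta_G(A)-1\le d(G)\le\delta_G(A)+1$, and then settles each of the three cases by computing $h_G(A)$; for instance, a non-central $A$ with $d(G)=\delta_G(A)$ is excluded because it would force $r_G(A)=1$ and $t_G(A)=0$, whence $h_G(A)=\delta_G(A)+1$, a contradiction. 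Your induction on $|G|$ never computes these invariants, and it also carries an unresolved bookkeeping problem you acknowledge but do not fix: the inductive hypothesis requires a trivial Frattini subgroup, yet $\frat(G/N)$ need not be trivial, and only conclusion~\eqref{sol3} of the theorem has a Frattini-allowing form --- cases~\eqref{sol1} and~\eqref{sol2} do not, so the inductive statement you would need is strictly stronger than the theorem and is never formulated.
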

In Section~\ref{sec:mon}, we  give examples of finite soluble groups $G$ with $m(G)=d(G)+1$, for each of the three possibilities arising in Theorem~\ref{thrm:sol}.

\section{Preliminary results}

Let $L$ be   a monolithic primitive group
and let $A$ be its unique minimal normal subgroup. For each positive integer $k$,
let $L^k$ be the $k$-fold direct
product of $L$.
The \textbf{\textit{crown-based power}} of $L$ of size  $k$ is the subgroup $L_k$ of $L^k$ defined by
$$L_k:=\{(l_1, \ldots , l_k) \in L^k  \mid l_1 \equiv \cdots \equiv l_k \ {\mbox{mod}} A \}.$$  

 In \cite{dv-l}, it is proved that, for every finite group $G$, there exists a monolithic group $L$ and a homomorphic image $L_k$ of $G$ such that 
\begin{enumerate}
	\item $ d(L/\soc L) < d(G) $, and 
	\item $d(L_k) =d(G).$ 
\end{enumerate} 
An $L_{k}$ with this property is called a \textbf{\textit{generating crown-based power}} for $G$.

In \cite{dv-l}, it is explained how  $d(L_{k})$ can be explictly computed in terms of $k$ and the structure of $L$.  
A key ingredient (when one wants to determine $d(G)$ from the behavior of 
the crown-based power homomorphic images of $G$)  is to evaluate, for each monolithic group $L$,  the maximal $k$ such that    $L_{k}$ is a   homomorphic image of $G$. 
This  integer $k$ 
arises from an  equivalence relation among the chief factors of $G$. In what follows, we give some details.

Given groups $G$ and $A$, we say that $A$ is a $G$-\textit{\textbf{group}} if $G$ acts on $A$ via automorphisms. In addition, $A$ is \textit{\textbf{irreducible}} if $G$ does not stabilise any non-trivial proper subgroups of $A$.
Two groups $A$ and $B$ are $G$-\textit{\textbf{isomorphic}} if there exists a group isomorphism $\phi: A\to B$ such that $\phi(g(a))=g(\phi(a))$ for all $a\in A$ and $g\in G.$
Following  \cite{paz},  we say that  
two irreducible $G$-groups $A$ and $B$  are  \textbf{\emph{$G$-equivalent}}, denoted $A  \sim_G B$, if there is an
isomorphism $\Phi: A\rtimes G \rightarrow B\rtimes G$ such that the following diagram commutes.

\begin{equation*}
	\begin{CD}
		1@>>>A@>>>A\rtimes G@>>>G@>>>1\\
		@. @VV{\phi}V @VV{\Phi}V @|\\
		1@>>>B@>>>B\rtimes G@>>>G@>>>1
	\end{CD}
\end{equation*}

Observe that two $G$-isomorphic $G$-groups are $G$-equivalent, and the converse holds if $A$ and $B$ are abelian. 

Let $A=X/Y$ be a chief factor of $G$. A complement $U$ of $A$ in $G$ is a subgroup of $G$ such that $$UX=G \hbox{ and } U \cap X=Y.$$ We say that   $A=X/Y$ is a \textbf{\textit{Frattini}} chief factor, if  $X/Y$ is contained in the Frattini subgroup of $G/Y$; this is equivalent to say that $A$ is abelian and there is no complement to $A$ in $G$. 

The  number $\delta_G(A)$  of non-Frattini chief factors $G$-equivalent to $A$   in any chief series of $G$  does not depend on the series. 

Now, 
we denote by  $L_G(A)$  the  \textit{\emph{monolithic primitive group  associated to $A$}},  
that is, 
$$L_G(A):=
\begin{cases}
	A\rtimes (G/\mathbf{C}_G(A)) & \text{ if $A$ is abelian}, \\
	G/\mathbf{C}_G(A)& \text{ otherwise}.
\end{cases}
$$
  
If $A$ is a non-Frattini chief factor of $G$, then $L_G(A)$ is a homomorphic image of $G$. 
More precisely,  there exists 
a normal subgroup $N$ such that $G/N \cong L_G(A)$ and $\soc (G/N) \sim_G A$. We identify $\soc( L_G(A))$ with  $A$, as $G$-groups.

 Consider now  all the normal subgroups $N$ of $G$ with the property that  $G/N \cong L_G(A)$ and $\soc (G/N) \sim_G A$: 
the intersection $R_G(A)$ of all these subgroups has the property that  $G/R_G(A)$ is isomorphic to the crown-based  power $(L_G(A))_{\delta_G(A)}$.  
Moreover, the socle $I_G(A)/R_G(A)$ of $G/R_G(A)$ is called the $A$-\textit{\textbf{crown}} of $G$ and it is  a direct product of $\delta_G(A)$ minimal normal subgroups $G$-equivalent to $A$.    

Note that, if  $L$ is monolithic primitive and $L_k$ is a homomorphic image of $G$ for some $k\geq 1$,  then  $L \cong L_G(A)$ for some non-Frattini chief factor $A$ of $G$ and $k \leq \delta_G(A)$. 
Furthermore, if $(L_G(A))_k$ is a generating crown-based power,  then so is   $(L_G(A))_{\delta_G(A)}$; in this case, we  say 
that  $A$  is a \textbf{\emph{generating chief factor}} for $G$. 

For an irreducible $G$-module $M$,   set
$$\begin{aligned}r_G(M):&=\dim_{\End_G(M)}M,\\ s_G(M)&:=\dim_{\End_G(M)} H^1(G,M), \\  t_G(M)&:=\dim_{\End_G(M)} H^1(G/\mathbf{C}_G(M),M).
	\end{aligned}$$
Notice that $$s_G(M)=t_G(M)+\delta_G(M),$$ see e.g.~\cite[1.2]{andrea2}. Now, define

$$h_{G}(M):= 
\begin{cases}
	\delta_G(M)&\text{ if $M$ is a  trivial } G\textrm{-module}, \\
	  \left\lfloor\frac{s_G(M)-1}{r_G(M)}\right\rfloor+2=\left\lfloor\frac{\delta_G(M)+t_G(M)-1}{r_G(M)}\right\rfloor+2&
	\text{ otherwise}.
\end{cases}
$$

By~\cite[Theorem A]{ag}, $t_G(M)<r_G(M)$ for any irreducible $G$-module $M$, and therefore
\begin{equation}\label{piuuno}h_G(M)\leq \delta_G(M)+1.
\end{equation}

The importance of $h_G(M)$ is clarified by the following proposition.

\begin{prop}[{{\cite[Proposition 2.1]{dl}}}]\label{acca}
	If there exists  an abelian generating chief factor $A$ of $G$, 
	then $ d(G)=h_G(A)$.
\end{prop}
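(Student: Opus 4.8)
The plan is to reduce the statement to a single crown-based power and then to a counting/linear-algebra problem over the endomorphism field of $A$. Since $A$ is a \emph{generating} chief factor, the crown-based power $L_k:=(L_G(A))_k$, where $L:=L_G(A)$ and $k:=\delta_G(A)$, is a generating crown-based power of $G$; by the defining property of generating crown-based powers this yields simultaneously $d(G)=d(L_k)$ and $d(L/\soc L)<d(G)$. So it suffices to compute $d(L_k)$ and to verify it equals $h_G(A)$. As $A$ is abelian we may write $L=A\rtimes H$ with $H:=G/\mathbf{C}_G(A)$ acting faithfully and irreducibly on $A$, and unravelling the definition of the crown-based power shows $L_k\cong A^k\rtimes H$ with $H$ acting diagonally, so that $A^k\cong A^{\oplus k}$ as an $H$-module. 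If $A$ is the trivial module then $L\cong\FF_p$ and $L_k$ is elementary abelian of rank $k$, giving $d(L_k)=k=h_G(A)$ at once; I would therefore assume from now on that $A$ is non-trivial, so that $A^H=0$. Throughout I abbreviate $r:=r_G(A)$, $t:=t_G(A)$, and recall $\End_H(A)=\FF_q$ with $\dim_{\FF_q}A=r$.

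Next I would pin down exactly when a $d$-tuple $((v_1,h_1),\dots,(v_d,h_d))$ (with $v_i\in A^k$, $h_i\in H$) generates $L_k$: namely iff (a) $(h_1,\dots,h_d)$ generates $H$, and (b) for every maximal $H$-submodule $M<A^k$ the reduced tuple generates $(A^k/M)\rtimes H$. Because $A^k$ is a homogeneous semisimple module with $\End_H(A)=\FF_q$, the maximal $H$-submodules are exactly the kernels of the non-zero maps in $\operatorname{Hom}_H(A^k,A)\cong\FF_q^k$, and each such quotient is $H$-isomorphic to $A$. Thus (b) says that for every non-zero $c\in\FF_q^k$ the tuple $(\pi_c(v_i),h_i)_i$ generates $A\rtimes H$, where $\pi_c\colon A^k\to A$ is the projection attached to $c$.

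The heart of the proof is the translation of (b) into linear algebra over $\FF_q$. With the $h_i$ fixed, the complements of $A$ in $A\rtimes H$ are parametrised by the cocycles $Z^1(H,A)$, and a tuple $(w_1,\dots,w_d)\in A^d$ fails to generate $A\rtimes H$ precisely when it lies in the image $B$ of the evaluation map $Z^1(H,A)\to A^d$, $\delta\mapsto(\delta(h_i))_i$. As the $h_i$ generate $H$ this evaluation is injective, and since $A^H=0$ the image $B$ is an $\FF_q$-subspace of $A^d$ of dimension $\dim_{\FF_q}Z^1(H,A)=r+t$. Viewing the data $(v_i)_i$ as the $\FF_q$-linear map $\FF_q^k\to A^d$, $c\mapsto(\pi_c(v_i))_i$, condition (b) asserts exactly that this map is injective modulo $B$; by a dimension count such a map exists iff $k\le rd-(r+t)$. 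Solving for the least admissible $d$ and using the identity $\lceil(k+t)/r\rceil=\lfloor(k+t-1)/r\rfloor+1$ gives the minimum value $\lfloor(k+t-1)/r\rfloor+2$, which is exactly $h_G(A)$ since $k=\delta_G(A)$. Combining (a) and (b), $d(L_k)=\max\bigl(d(H),h_G(A)\bigr)$; as $d(H)=d(L/\soc L)<d(G)=d(L_k)$, the maximum cannot be $d(H)$, so $d(L_k)=h_G(A)$ and hence $d(G)=h_G(A)$.

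The step I expect to be the main obstacle is the cohomological bookkeeping of the third paragraph. One must check that $B$ really is an $\FF_q$-subspace whose dimension $r+t$ is \emph{independent} of the chosen generators of $H$ (a Gaschütz-type invariance), and that the family of conditions in (b), one for each maximal submodule of the homogeneous module $A^k$, genuinely collapses to the single rank condition ``injective modulo $B$''. Identifying $t=\dim_{\FF_q}H^1(H,A)$ via $H=L/\mathbf{C}_L(A)$ and recalling $s_G(A)=\delta_G(A)+t_G(A)$ from the preliminaries is what converts the linear-algebra threshold into the closed form $h_G(A)=\lfloor(s_G(A)-1)/r_G(A)\rfloor+2$. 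A purely enumerative alternative would count the generating $d$-tuples of $L_k$ by a Möbius/Eulerian function and detect when this count is positive; it yields the same threshold but needs more machinery than the bare existence argument required here.
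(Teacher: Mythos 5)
Your proof is correct, but there is nothing in this paper to compare it against: Proposition~\ref{acca} is quoted from \cite[Proposition 2.1]{dl}, and no proof of it appears here. What you have written is a sound, self-contained reconstruction of the machinery that underlies that citation. The reduction to computing $d(L_k)$ with $L_k\cong A^k\rtimes H$ and $k=\delta_G(A)$ is legitimate because the paper's definition of a generating chief factor hands you both $d(L_k)=d(G)$ and $d(H)=d(L/\soc L)<d(G)$; the identification of the maximal $H$-submodules of the homogeneous module $A^k$ with proportionality classes of non-zero elements of $\operatorname{Hom}_H(A^k,A)\cong\FF_q^k$ is right; the complement--cocycle dictionary makes the failure set $B\subseteq A^d$ an $\FF_q$-subspace of dimension $\dim_{\FF_q}Z^1(H,A)=r+t$ (using $A^H=0$ and $H=G/\mathbf{C}_G(A)$), and here no separate Gasch\"utz-type invariance is needed, since injectivity of the evaluation map holds for \emph{any} generating tuple $(h_1,\dots,h_d)$, so the dimension count is automatically independent of that choice; finally, the threshold you obtain, $d(L_k)=\max\bigl(d(H),\lceil(k+t)/r\rceil+1\bigr)$, is precisely the Aschbacher--Guralnick/Lucchini formula on which \cite{dl} and \cite{dv-l} rest, and your floor/ceiling conversion is valid because $k=\delta_G(A)\geq 1$. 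The only step I would insist you write out in full is the justification of your generation criterion (a)$+$(b): if a maximal subgroup $M^*$ of $A^k\rtimes H$ does not contain $A^k$, then $M^*\cap A^k$ is normalized by $M^*$ and by the abelian group $A^k$, hence is normal in $L_k$, and maximality of $M^*$ forces $A^k/(M^*\cap A^k)$ to be irreducible; this is what guarantees that conditions (a) and (b) detect every maximal subgroup, and it is the one point your sketch asserts without argument.
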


When $G$ admits  a non-abelian generating chief factor $A$, a relation between $\delta_G(A)$ and $d(G)$ is provided by the following result.
\begin{prop}\label{nonabe}
	If $d(G)\geq 3$ and there exists  a non-abelian generating chief factor $A$ of $G$, 
	then $$\delta_G(A)>\frac{|A|^{d(G)-1}}{2|\mathbf{C}_{\aut A}(L_G(A)/A)|}\geq \frac{|A|^{d(G)-2}}{2\log_2|A|}.
	$$
\end{prop}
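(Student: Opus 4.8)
The plan is to read off $\delta_G(A)$ from the minimal number of generators of the crown-based power $L_k$, where $L:=L_G(A)$ and $k:=\delta_G(A)$. Since $A$ is non-abelian, $L=G/\mathbf C_G(A)$ is monolithic primitive with $\soc(L)=A$ and $\mathbf C_L(A)=1$, so that $A\trianglelefteq L\le\aut A$; write $\ol L:=L/A\le\out A$. Because $A$ is a generating chief factor, $(L_G(A))_{\delta_G(A)}=L_k$ is a generating crown-based power, whence $d(L_k)=d(G)=d$ and $d(\ol L)<d$. As $d\ge 3$ and $d(L_k)=d>d-1$, the group $L_k$ is not $(d-1)$-generated; since $L_j$ is a quotient of $L_{j+1}$, the function $j\mapsto d(L_j)$ is non-decreasing, and therefore $k>k_{\max}(d-1)$, where $k_{\max}(e)$ denotes the largest $j$ for which $L_j$ is $e$-generated. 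It thus suffices to bound $k_{\max}(d-1)$ from below.

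Next I would invoke the standard combinatorial description of $k_{\max}(e)$ for a monolithic group with non-abelian socle. Given a generating $e$-tuple $\ol x=(\ol x_1,\dots,\ol x_e)$ of $\ol L$, a tuple of $L_j$ all of whose coordinates reduce to $\ol x$ generates $L_j$ if and only if each of its $j$ coordinate-tuples in $L^e$ generates $L$ and the $j$ coordinate-tuples are pairwise inequivalent under the group $\mathcal A$ of automorphisms of $L$ inducing the identity on $\ol L$. Hence $k_{\max}(e)$ equals the maximum, over generating $e$-tuples $\ol x$, of the number of $\mathcal A$-orbits on the set of generating lifts of $\ol x$, so in particular $k_{\max}(e)\ge c_e(\ol x)/|\mathcal A|$, where $c_e(\ol x)$ is the number of lifts of $\ol x$ to $L^e$ that generate $L$. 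A short argument using $\mathbf C_L(A)=1$ shows that restriction to $A$ embeds $\mathcal A$ into $\aut A$ with image inside $\mathbf C_{\aut A}(L/A)$, so $|\mathcal A|\le|\mathbf C_{\aut A}(L/A)|$.

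To estimate $c_{d-1}(\ol x)$ I would fix a generating $(d-1)$-tuple $\ol x$ of $\ol L$ (one exists since $d(\ol L)\le d-1$) and bound the non-generating lifts. The total number of lifts is $|A|^{d-1}$, and every non-generating lift lies in some maximal subgroup $M$ of $L$ with $MA=L$; since $[A:M\cap A]=[L:M]$, each coordinate of such a lift ranges over a set of size $|M\cap A|$, so a union bound gives
\[
|A|^{d-1}-c_{d-1}(\ol x)\le |A|^{d-1}\sum_{M}[L:M]^{-(d-1)},
\]
the sum being over maximal $M$ with $MA=L$. As $d-1\ge 2$, this is at most $|A|^{d-1}\sum_M[L:M]^{-2}$, and the key analytic input is that $\sum_M[L:M]^{-2}<1/2$. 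Granting this, $c_{d-1}(\ol x)>|A|^{d-1}/2$, and combining with the previous step yields
\[
\delta_G(A)>k_{\max}(d-1)\ge\frac{c_{d-1}(\ol x)}{|\mathbf C_{\aut A}(L/A)|}>\frac{|A|^{d-1}}{2\,|\mathbf C_{\aut A}(L/A)|}.
\]

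Finally, the second inequality follows from $|\mathbf C_{\aut A}(L/A)|\le|A|\log_2|A|$. Writing $A=S^n$ with $S$ non-abelian simple, one has $|\mathbf C_{\aut A}(L/A)|=|A|\cdot|\mathbf C_{\out A}(\ol L)|$, and since $A$ is a minimal normal subgroup of $G$ the group $\ol L$ projects onto a transitive subgroup of $\perm_n$; the centralizer of a transitive action has order at most $n$, which forces $|\mathbf C_{\out A}(\ol L)|\le n\,|\out S|\le n\log_2|S|=\log_2|A|$, using the CFSG bound $|\out S|\le\log_2|S|$ (for $n=1$ this reduces to $|\mathbf C_{\out A}(\ol L)|\le|\out S|\le\log_2|S|$). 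I expect the main obstacle to be the estimate $\sum_M[L:M]^{-2}<1/2$: it requires controlling the number and the indices of the maximal subgroups of $L$ that supplement $\soc(L)=S^n$ -- of product, diagonal, and almost-simple type -- which I would handle through Liebeck--Shalev-type bounds on $\sum_M[L:M]^{-s}$ together with a direct check of the finitely many small socles for which the crude union bound is tight.
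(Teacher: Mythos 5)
Your proposal has the same architecture as the paper's proof, and two of its three ingredients are correct. The reduction to counting generating lifts modulo automorphisms is exactly the paper's use of the Dalla Volta--Lucchini criterion \eqref{crit}: your orbit-counting derivation, including the embedding of $\mathcal A$ into $\mathbf{C}_{\aut A}(L/A)$ via restriction (using $\mathbf{Z}(A)=1$), is sound, and in fact your orientation of the count (number of generating lifts of a generating tuple of $L/A$, divided by $\gamma$) is the correct one --- the displayed ratios in \eqref{crit} and \eqref{eq:1} of the paper have $\phi_L$ and $\phi_{L/A}$ inverted, evidently a typo. Likewise your bound $|\mathbf{C}_{\aut A}(L/A)|\le |A|\log_2|A|$ is correct and is essentially the paper's \eqref{eq:2}: the paper quotes $\gamma\le n|S|^{n-1}|\aut S|$ from \cite{dv-l-2} and Kohl's bound $|\out S|\le\log_2|S|$, while your transitivity argument (centralizer of a transitive subgroup of $\perm(n)$ is semiregular, and the base-group part of the centralizer is determined by one coordinate) proves the same estimate directly.

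The genuine gap is the middle step, $c_{d-1}(\ol x)>|A|^{d-1}/2$. The paper obtains this by citing \cite[Corollary 1.2]{dl}; you propose to prove it by a union bound over the maximal subgroups $M$ of $L$ with $MA=L$, which requires $\sum_M[L:M]^{-2}<1/2$. That inequality is false. Already for $L=A\cong\alt(6)$ (so $L/A=1$ and every $(d-1)$-tuple is a lift) the maximal subgroups form five conjugacy classes, of indices $6,6,10,15,15$, each class containing index-many subgroups, so $\sum_M[L:M]^{-2}=\frac16+\frac16+\frac1{10}+\frac1{15}+\frac1{15}=\frac{17}{30}>\frac12$; the quantity you actually need, the probability that two random elements generate $\alt(6)$, equals $53/90$ --- larger than $1/2$ but invisible to the union bound. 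This case genuinely occurs in the proposition (take $G=\alt(6)^{54}$, where $d(G)=3$ and $L=L_G(A)=\alt(6)$: there the union bound yields $\delta>39$, strictly weaker than the claimed $\delta>|A|^{d-1}/(2\gamma)=45$). Nor can it be dismissed as one of "finitely many small socles to check": $53/90$ is precisely the extremal constant in the Detomi--Lucchini theorem the paper invokes, and establishing the $1/2$ bound for all monolithic $L$ with socle $S^n$ --- whose supplements of $A$ come in product, diagonal and almost-simple families --- requires the Liebeck--Shalev-type zeta-function machinery together with sharper-than-union-bound counting in the exceptional cases; that is, it amounts to reproving \cite[Corollary 1.2]{dl}. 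As written, the central quantitative input of the proposition is asserted rather than proven, and the elementary route you sketch for it provably cannot deliver it.
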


\begin{proof}
Suppose $d(G)\ge 3$ and let $A$ be a non-abelian generating chief factor of $G$.

For a finite group $X$, let $\phi_X(m)$ denote the number of ordered $m$-tuples $(x_1,\dots,x_m)$ of elements of $X$ generating $X$. Define
\begin{align*}
L&:=L_G(A),\\ 
\gamma&:=|\mathbf{C}_{\aut A}(L/A)|,\\
\delta&:=\delta_G(A),\\
d&:=d(G).
\end{align*}
 In~\cite{dv-l}, it is proved that, if $m\geq d(L),$ then
\begin{equation}\label{crit}d(L_k)\leq m {\text { if and only if }} k\leq \frac{\phi_{L/A}(m)}{\phi_L(m)\gamma}.
\end{equation} By the main result in \cite{uni}, $d(L)=\max(2,d(L/A))$. Since $A$ is a generating chief factor, from the definition,  we have $d(L/A) < d(L_{\delta_G(A)})=d(G)$. As $2 < d(G),$ it follows $d(L)<d(G).$ Now, by applying~\eqref{crit} with $k=\delta_G(A)$ and $m=d(G)-1,$ we deduce
\begin{equation}\label{eq:0}\delta_G(A)>\frac{\phi_{L/A}(d(G)-1)}{\phi_L(d(G)-1)\gamma}.\end{equation}

By~\cite[Corollary 1.2]{dl}, 
\begin{equation}\label{eq:1}\frac{\phi_{L/A}(d(G)-1)}{\phi_L(d(G)-1)}\geq \frac{|A|^{d(G)-1}}{2}.\end{equation}

Moreover, $A\cong S^n$, where $n$ is a positive integer and $S$ is a non-abelian simple group. In the proof
of Lemma 1 in \cite{dv-l-2}, it is shown that $$\gamma \leq n|S|^{n-1}|\aut(S)|.$$
Now,~\cite{ku} shows that $|\out(S)| \leq \log_2(|S|)$ and hence
\begin{equation}\label{eq:2}\gamma\leq n|S|^{n}\log_2(|S|)\leq |S|^{n}\log_2(|S|^n)=|A|\log_2(|A|).\end{equation} From~\eqref{eq:0},~\eqref{eq:1} and~\eqref{eq:2}, we obtain
$$\delta_G(A) > \frac{\phi_{L/A}(d(G)-1)}{\phi_L(d(G)-1)\gamma}\geq \frac{|A|^{d(G)-1}}{2|A|\log_2|A|}=\frac{|A|^{d(G)-2}}{2\log_2|A|}.\qedhere$$
\end{proof}

Recall that $m(G)$ is the largest cardinality of an independent generating set of $G$.
\begin{thm}[{{\cite[Theorem 1.3]{ap}}}]\label{bound} Let $G$ be a finite group. Then $m(G)\geq a+b,$ where $a$ and $b$ are, respectively, the number of non-Frattini and non-abelian factors in a chief series of $G$. Moreover, if $G$ is soluble, then $m(G)=a.$
\end{thm}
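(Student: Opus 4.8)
The plan is to prove both assertions by induction on the chief length of $G$, constructing an independent generating set one chief factor at a time. Fix a minimal normal subgroup $N$ of $G$; by induction the counts $a$ and $b$ are controlled for $G/N$, and I split into three cases according to the nature of $N$: a Frattini factor (contributing to neither $a$ nor $b$), an abelian non-Frattini factor (contributing $1$ to $a$ only), and a non-abelian factor (contributing $1$ to each of $a$ and $b$). In each case the goal is to show that $m(G)$ exceeds $m(G/N)$ by exactly the amount by which $a+b$ increases, namely $0$, $1$, and $2$ respectively.

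The two easy cases I would dispatch directly. If $N\leq\frat(G)$, lift an independent generating set of $G/N$ to $G$; the lifts generate because $N\leq\frat(G)$, and deleting any one of them produces a subgroup whose image in $G/N$ is proper, hence a proper subgroup of $G$. This gives $m(G)\geq m(G/N)$. If $N$ is abelian and non-Frattini, then by the description of Frattini factors recalled above $N$ has a complement $K\cong G/N$; I take an independent generating set $s_1,\dots,s_r$ of $K$ and adjoin an arbitrary nontrivial $x\in N$. Deleting some $s_i$ again collapses the image in $G/N$, deleting $x$ leaves $K<G$, and $\langle K,x\rangle\cap N$ is a nontrivial subgroup of $N$ invariant under $G=KN$ (using that $N$ is abelian), hence equals $N$ by minimality; so the enlarged set generates $G$. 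This gives $m(G)\geq m(G/N)+1$. Iterating these two cases along a chief series of a soluble group, where every factor is abelian and so $b=0$, already yields the inequality $m(G)\geq a$.

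The non-abelian case is where I expect the real difficulty. Here $N\cong S^k$ with $S$ non-abelian simple, and I must gain two independent generators. Writing $H=\langle s_1,\dots,s_r\rangle$ for a lift of an independent generating set of $G/N$, so that $G=HN$ and $H$ permutes the $k$ simple factors of $N$ transitively, I seek $x,y\in N$ such that $\langle H,x,y\rangle=G$ while $\langle H,x\rangle\neq G\neq\langle H,y\rangle$; independence of the $s_i$ is automatic since deleting one collapses the image in $G/N$. Since $\langle H,z\rangle=G$ is equivalent to $z$ normally generating $N$ inside $\langle H,z\rangle$, the task is to find a pair $x,y$ that jointly normally generate $N$ under $H$ but neither of which does so alone. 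I would build such a pair from a generating pair of a single factor $S$ (available since $d(S)=2$) together with the transitivity of $H$ on the factors, adjusting the lifts $s_i$ (or the chosen independent generating set of $G/N$) if necessary so that the single-element closures $\langle x^{\langle H,x\rangle}\rangle$ and $\langle y^{\langle H,y\rangle}\rangle$ stay proper. Establishing the existence of this pair in general—which requires understanding the $H$-invariant subgroups of $S^k$ and ruling out the possibility that every admissible first element already normally generates $N$—is the main obstacle; everything else is bookkeeping. This step contributes the required $+2$, completing $m(G)\geq a+b$.

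Finally, for the equality in the soluble case I must prove the reverse inequality $m(G)\leq a$, once more by induction. A Frattini factor leaves $a$ unchanged, and one checks $m(G)=m(G/N)$: reduction modulo $N\leq\frat(G)$ carries an independent generating set to one of the same cardinality, because a coincidence or a dependence among the images would lift, using $N\leq\frat(G)$, to a proper subset of the original set generating $G$. For a complemented factor I would show $m(G)\leq m(G/N)+1$ by associating to an independent generating set $\{s_1,\dots,s_\ell\}$, for each $i$, a maximal subgroup $M_i\geq\langle\{s_j:j\neq i\}\rangle$ avoiding $s_i$; in a soluble group each $M_i$ picks out a complemented chief factor through $G/\Core_G(M_i)$, and a counting argument over these factors bounds $\ell$ by the total number $a$ of complemented chief factors. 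Combined with the lower bound, this gives $m(G)=a$.
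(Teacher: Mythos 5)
This theorem is stated in the paper as a quotation of \cite[Theorem 1.3]{ap} and no proof of it is given here, so your attempt can only be judged on its own merits; on those merits it contains two genuine gaps, located exactly at the two hard points of the statement. The first is the non-abelian case of the lower bound. Your induction requires the quantitative step $m(G)\geq m(G/N)+2$ for every non-abelian minimal normal subgroup $N\cong S^k$, and your plan is to extend a lifted independent generating set of $G/N$, generating a subgroup $H$ with $HN=G$, by two elements $x,y\in N$ satisfying $\langle H,x,y\rangle=G$ while $\langle H,x\rangle\neq G\neq\langle H,y\rangle$. You concede that the existence of such a pair is ``the main obstacle'', and it is: nothing in your setup rules out that for every admissible choice of lifts (and of maximal-size independent generating set of $G/N$, which you are only allowed to replace by another one of the \emph{same} cardinality $m(G/N)$) one of the one-element extensions already generates $G$. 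Controlling this requires a description of the $H$-invariant subgroups of $S^k$ and of the supplements of $N$, which is precisely the theory of monolithic groups; in the literature the increment is obtained through the almost simple group $X$ induced on a simple factor of $N$ (as in \cite{mingen}, where $\mu(G)=m(G)-m(G/\soc G)\geq \mu(X)$ is proved), not by a direct greedy extension. Calling the rest ``bookkeeping'' inverts the situation: the Frattini and abelian cases you completed are the bookkeeping, and the missing case is the theorem.

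The second gap is the upper bound $m(G)\leq a$ for soluble $G$ (your lower bound $m(G)\geq a$ is correct). The counting you propose does not work as described: the assignment sending an index $i$ to the chief factor attached to $G/\Core_G(M_i)$ need not be injective, even up to $G$-equivalence, so ``counting over these factors'' does not bound $\ell$ by $a$. Concretely, for $G=S_3$ with the independent generating set $\{(12),(13)\}$, both associated maximal subgroups $M_1=\langle(13)\rangle$ and $M_2=\langle(12)\rangle$ have trivial core and both pick out the unique chief factor of order $3$, which occurs only once in a chief series; a per-factor count therefore fails, and $\ell\leq a$ holds only because the factor of order $2$ is never used. A correct elementary route is a different induction on $|G|$: if $N\leq\frat(G)$ then the images of an independent generating set form an independent generating set of $G/N$ of the same size, so $m(G)\leq m(G/N)$; if $N$ is abelian non-Frattini and some image $\bar g_i$ is redundant modulo $N$, then $K:=\langle g_j\mid j\neq i\rangle$ satisfies $KN=G$ and $K\cap N=1$ (since $K\cap N$ is normalized by $K$ and centralized by the abelian group $N$, hence normal in $G$), so $K$ is a complement isomorphic to $G/N$ in which $\{g_j\mid j\neq i\}$ is independent, giving $\ell-1\leq m(G/N)$; and if no image is redundant then $\ell\leq m(G/N)$ directly. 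Either way $\ell\leq m(G/N)+1=a$, closing the induction. As written, your proposal establishes only the soluble lower bound and the two easy cases of the general inequality.
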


\begin{cor}\label{coro}
	Assume that $A$ is the unique minimal normal subgroup of a finite group $G$. If $A$ is non-abelian, then $m(G)\geq 3.$
\end{cor}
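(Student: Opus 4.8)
The plan is to combine the lower bound of Theorem~\ref{bound} with the Apisa--Klopsch classification (Theorem~\ref{apisa}). First I would record the two elementary facts that drive everything. A generating set of $G$ of minimum size is automatically irredundant, so $m(G)\ge d(G)$. And since $A$ is a non-abelian minimal normal subgroup, $A\cong S^n$ for some non-abelian simple group $S$, so $G$ has a non-abelian composition factor and is therefore insoluble; in particular $G$ is non-cyclic, whence $d(G)\ge 2$. With these in hand, the quickest route is to invoke Theorem~\ref{apisa} in contrapositive form: as $G$ is not soluble, $d(G)\ne m(G)$, and combined with $m(G)\ge d(G)$ this forces $m(G)\ge d(G)+1\ge 3$. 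This already settles the statement, and notably uses neither the uniqueness of $A$ nor Theorem~\ref{bound}.

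If one prefers to argue directly from Theorem~\ref{bound}, I would instead count non-Frattini and non-abelian chief factors. Fix a chief series $1=G_0<G_1<\cdots<G_r=G$. Because $A$ is the \emph{unique} minimal normal subgroup, the bottom term is $G_1=A$; being non-abelian, the factor $A$ is non-Frattini (a Frattini chief factor is abelian) and so contributes to both $a$ and $b$ in Theorem~\ref{bound}. Moreover the top factor $G/G_{r-1}$ is always non-Frattini, since a chief factor $H/K$ is Frattini only when $H/K\le\frat(G/K)$, which is impossible for $H=G$, $K=G_{r-1}$ as the Frattini subgroup of a non-trivial group is proper. Hence, as soon as $r\ge 2$, these two \emph{distinct} non-Frattini factors give $a\ge 2$ and $b\ge 1$, so $m(G)\ge a+b\ge 3$.

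The remaining, and genuinely separate, case is $r=1$, that is $G=A$; here uniqueness of $A$ forces $n=1$, so $G=S$ is simple and the chief series has the single factor $S$, giving $a+b=2$. Thus Theorem~\ref{bound} alone yields only $m(G)\ge 2$, and this is the main obstacle. It is precisely where insolubility is indispensable: by the contrapositive of Theorem~\ref{apisa} one has $m(S)>d(S)=2$, hence $m(G)\ge 3$. So the only delicate point in the Theorem~\ref{bound} approach is the simple case, which the Apisa--Klopsch theorem disposes of at once; for this reason I would in practice present the uniform first argument, which handles both cases simultaneously.
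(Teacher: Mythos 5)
Your proposal is correct, but your primary argument is genuinely different from the paper's. The paper splits into two cases. For $G$ not simple, it argues exactly as in your second (fallback) argument: a chief series through $A$ has at least two non-Frattini factors and at least one non-abelian factor, so Theorem~\ref{bound} gives $m(G)\geq a+b\geq 3$. For $G$ simple, however, the paper does \emph{not} invoke Apisa--Klopsch; it uses an elementary involution argument: the conjugates of an involution $l$ generate $G$ (by simplicity), so the set $\{l^x \mid x\in G\}$ contains an independent generating set, and since a group generated by two involutions is dihedral, hence soluble, that set must have cardinality at least $3$. Your uniform argument --- $G$ insoluble, so $d(G)\geq 2$ and, by the contrapositive of Theorem~\ref{apisa}, $m(G)\neq d(G)$, whence $m(G)\geq d(G)+1\geq 3$ --- is valid and legitimately available, since Theorem~\ref{apisa} is quoted in the paper as an external result; it is shorter, handles both cases at once, and as you note needs neither the uniqueness of $A$ nor Theorem~\ref{bound}. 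What it costs is that it rests the entire corollary on the full strength of the Apisa--Klopsch classification, a much deeper theorem than what is actually needed, whereas the paper's treatment of the delicate simple case is self-contained and elementary (modulo the existence of an involution, i.e.\ Feit--Thompson) and even produces explicit independent generating sets consisting of conjugate involutions. You also correctly isolate where the naive Theorem~\ref{bound} approach breaks down: for $G=A$ simple the bound only yields $m(G)\geq 2$, which is precisely the gap the paper's involution argument (and your appeal to Theorem~\ref{apisa}) is designed to close.
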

\begin{proof}
Suppose first that $G$ is simple. Let $l$ be an element of $G$ of order 2. Since $G=\langle l^x\mid x\in G\rangle$, the set $\{l
^x\mid x \in G\}$ contains a minimal generating set of $G.$ Since $G$ cannot be generated by 2 involutions, this minimal generating set has cardinality at least 3. Thus $m(G)\ge 3$.

Suppose next that $G$ is not simple. Let $a$ and $b$ be the number of non-Frattini and non-abelian factors in a chief series of $G$. As $G$ is not simple,  there exists a maximal normal subgroup $N$ of $G$ containing $A$ and we have a chief series $1\unlhd N_1\unlhd \dots \unlhd N_{t-1}\unlhd N_t=G$ with $N_1=A$ and $N_{t-1}=N.$  Then $a\geq 2$, $b\geq 1$ and $m(G)\geq a+b\geq 3$ by Theorem~\ref{bound}.
\end{proof}

\section{Proof of the main results}
Let $G$ be a finite group, let $d:=d(G)$ and let $m:=m(G)$. Suppose $m=d+1.$ Let $A$ be a generating chief factor of $G$  and let $\delta:=\delta_G(A)$, $L:=L_G(A).$

\subsection{$A$ is non-abelian} First suppose $\delta\geq 2.$ By Theorem \ref{bound}, $m\geq 2\delta$ and therefore $d\geq 2\delta-1\geq 3.$ By Proposition \ref{nonabe}, $$\delta > \frac{|A|^{d-2}}{2\log_2|A|}\geq \frac{
	|A|^{2\delta-3}}{2\log_2|A|}\geq  \frac{60^{2\delta-3}}{2\log_260},$$
but this is never true.

Suppose now $\delta=1$. In this case, by the main theorem in~\cite{uni}, $d=d(L)=\max(2,d(L/A))=2$ and therefore $m=3.$ Since $L$ is an epimorphic image of $G$,  we must have $m(L)\leq 3$. On the other hand $m(L)\geq 3$ by Corollary \ref{coro}. Hence $m(L)=m=3$ and therefore it follows from \cite[Lemma~11]{min} that $G/\frat(G)\cong L.$ Finally, by Theorem \ref{bound}, $m(L)=3$ implies $m(L/A)\leq 1,$ and this is possible only if $L/A$ is a cyclic $p$-group. This concludes the proof of Theorem~\ref{uno}.

\subsection{ $A$ is abelian}
It follows from Proposition~\ref{acca} and~\eqref{piuuno} that $$\delta-1 \leq m-1 = d = h_G(A)\leq \delta+1.$$

If $d=\delta-1,$ then $m=\delta$ and this is possible if $G/\frat(G)\cong A^\delta.$ However, in this case $A$ would be a trivial $G$-module and therefore $d=h_G(A)=\delta=m,$ a contradiction.

Now suppose $d=\delta.$ By Theorem \ref{bound}, $G$ is soluble and contains only one non-Frattini chief factor which is not $G$-isomorphic to $A.$
If $A$ is non-central in $G$, then $G/\frat(G)\cong L_\delta$ and $L/A$ is a cyclic $p$-group, however this  implies $r_G(A)=1,$ $t_G(A)=0$ and $d=h_G(A)=\delta+1,$ which is a contradiction. If $A$ is central, then $G/\frat(G)\cong V\rtimes P$
where $P$ is a finite $p$-group, $V$ is an irreducible  $P$-module and $d(P)=d.$ In particular, we obtain~\eqref{sol1} in Theorem~\ref{thrm:sol}.

 Finally assume $d=\delta+1.$ Notice that in this case $L=A\rtimes H,$ where $A$ is a faithful, non-trivial, irreducible $H$-module, and $$m(H)\leq m-\delta=\delta+2-\delta=2.$$ In particular, by Corollary \ref{coro},  $H$ is soluble. 
 
 If $m(H)=2,$ then $G/\frat(G)\cong L_\delta.$ In particular, we obtain~\eqref{sol2} in Theorem~\ref{thrm:sol}.
 
 If $m(H)=1,$ then there exist two normal subgroups $N_1$  and $N_2$ of $G$ such that $1 \lneq N_1 \leq N_2,$ $G/N_2\cong L_\delta,$ $N_2/N_1\leq \frat(G/N_1)$ and $N_1/\frat(G)$ is an abelian minimal normal subgroup of $G/\frat(G).$ As $m(H)=1$, $H$ is cyclic of prime power order. In particular, we obtain~\eqref{sol3} in Theorem~\ref{thrm:sol}.

\section{Examples for Theorems~\ref{uno} and~\ref{thrm:sol}}\label{sec:mon}
\subsection{Monolithic groups: examples for Theorem~\ref{uno}}

Let $G$  be monolithic primitive with non-abelian socle  $N= S_1\times \dots \times S_n$,
with $S\cong S_i$ for each $1\leq i \leq n.$ The number $\mu(G)=m(G)-m(G/N)$ has been investigated in \cite{mingen}. 
The group $G$ acts by conjugation on the set $\{S_1,\dots,S_n\}$ of the simple components  of $N.$
This produces a group homomorphism
$G\to \sym(n)$ and the image $K$ of $G$ under this homomorphism is a transitive subgroup of $\sym(n).$
Moreover the subgroup $X$  of $\aut S$ induced by the conjugation action of $\textbf{N}_G(S_1)$ on the first
factor $S_1$ is an almost simple group with socle $S.$ 

By \cite [Proposition 4]{mingen} $\mu(G)\geq \mu(X)=m(X)-m(X/S).$ Assume $m(G)=3.$ Observe that by Theorems~\ref{apisa} and~\ref{uno},  
 $G/N$ is cyclic of prime power order. If $X=S$, then 
 \begin{align*}
 3&=m(G)=m(G/N)+\mu(G)\geq m(G/N)+\mu(X)= m(G/N)+m(S)\\
 &\geq m(G/N)+3.
 \end{align*} This implies that $G/N=1,$ and $G=S$ is a simple group. If $X\neq S$, then $G\neq N$ and $$3=m(G)\geq m(G/N)+\mu(G)\geq 1+\mu(X).$$ Moreover $X/S$ is a non-trivial cyclic group of prime power order, so $$m(X)=m(X/S)+\mu(X)\leq 1+\mu(X)\leq 1+2=3.$$ By Corollary~\ref{coro}, $m(X)=3.$

 The groups
 $$\mathrm{P}\Sigma\mathrm{L}_2(9), M_{10},\mathrm{Aut}(\mathrm{PSL}_2(7))$$ are the only known examples (to the authors of this paper) of almost simple groups $X$ with $X\neq \soc(X)$ and $m(X)=3.$ We do believe that there are other such examples, but our current computer codes are not good enough to have a thourough investigation.

 Let $S:=\mathrm{PSL}_2(7)$   and consider
 the wreath product $W:=\aut(S) \wr \perm(n)$. Any element $w\in W$ can be written as $w=\pi(a_1,\dots,a_n),$ with $\pi\in \perm(n)$ and $a_i\in \aut(S)$ for $1\leq i\leq n$. In particular $N=\soc(W) \cong S_1\times \dots\times S_n=\{(s_1,\dots,s_n)\mid s_i \in S\}$.

\begin{prop}
Let $G$ be the subgroup of $W$ generated by $N=\soc(W)$ and $\gamma=\sigma(a,1,\dots,1),$ where $\sigma=(1,2,\dots,n)\in \perm(n)$ and $a\in \aut(S)\setminus S.$ If $n=2^t$ for some positive integer $t,$ then
$m(G)=3.$
\end{prop}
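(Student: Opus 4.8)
The plan is to show that $G$ is a monolithic primitive group satisfying the hypotheses of Theorem~\ref{uno}, and then pin down $m(G)$ exactly using the machinery of $\mu(G)=m(G)-m(G/N)$ from \cite{mingen}. First I would verify that $N=\soc(W)$ really is the unique minimal normal subgroup of $G$, so that $G$ is monolithic primitive with non-abelian socle. Since $\gamma=\sigma(a,1,\dots,1)$ cycles the components $S_1,\dots,S_n$ transitively, the image $K$ of $G$ in $\sym(n)$ is the cyclic group $\langle\sigma\rangle$ of order $n=2^t$, which is transitive; and because $a\in\aut(S)\setminus S$, the almost simple group $X\leq\aut S$ induced by $\mathbf{N}_G(S_1)$ on $S_1$ is strictly larger than $S$. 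The key point is that $X=\aut(\mathrm{PSL}_2(7))=\mathrm{P}\mathrm{GL}_2(7)$, one of the listed examples with $X\neq\soc(X)$ and $m(X)=3$.

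Next I would compute $m(G/N)$ and use the inequality $\mu(G)\geq\mu(X)=m(X)-m(X/S)$ from \cite[Proposition 4]{mingen} together with its companion upper bound. The quotient $G/N$ is cyclic, generated by the image of $\gamma$; one checks its order is $n=2^t$ (the permutation part $\sigma$ has order $n$, and modulo $N$ the $\aut(S)$-entries collapse appropriately), so $m(G/N)=1$. Since $X/S$ is cyclic of order $2$ we have $m(X/S)=1$, and as $m(X)=3$ this gives $\mu(X)=m(X)-m(X/S)=2$. Combining, $m(G)=m(G/N)+\mu(G)\geq 1+\mu(X)=1+2=3$. For the reverse inequality I would invoke the upper bound on $\mu(G)$ available for monolithic primitive groups of this wreath-product shape, showing $\mu(G)\leq 2$ when $G/N$ is cyclic and $X$ contributes only $\mu(X)=2$; equivalently, one argues directly that any independent generating set of $G$ has size at most $3$. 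Together these yield $m(G)=3$.

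The delicate hypothesis is the requirement $n=2^t$. I expect the main obstacle to be controlling $\mu(G)$ from above, and this is precisely where the $2$-power condition enters: the relevant computation in \cite{mingen} of $\mu(G)$ for a group with transitive component-permutation group $K$ and inducing subgroup $X$ depends on the structure of $K=\langle\sigma\rangle$ acting on the components and on how the outer part $X/S\cong C_2$ interacts with this cyclic action. When $n$ is a power of $2$, the element $\gamma$ can be arranged so that the induced outer automorphism data is compatible with the $2$-group $K$, forcing $\mu(G)$ to equal exactly $\mu(X)=2$ rather than growing with $n$. I would carry this out by tracking how the single $\aut(S)$-generator $a$ propagates around the $n$-cycle under powers of $\gamma$, and verifying that the resulting contribution to independent generating sets saturates at $2$ precisely when $n=2^t$.

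Finally, having established $m(G)=3$, the structural conclusions of Theorem~\ref{uno} are automatically consistent: $G$ is monolithic primitive, $d(G)=2$, and $G/\soc(G)\cong C_{2^t}$ is cyclic of prime power order, so $G$ is a genuine non-simple example realizing the theorem. The heart of the argument is thus the two-sided estimate on $\mu(G)$, with the lower bound coming for free from \cite[Proposition 4]{mingen} and the matching upper bound being the step that forces the hypothesis $n=2^t$.
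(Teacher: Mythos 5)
Your lower bound is fine (and agrees in spirit with the paper, which gets $m(G)\geq 3$ even more cheaply from Theorem~\ref{bound}), but the upper bound $m(G)\leq 3$ --- which is the entire content of the proposition --- is not proved in your proposal. You write that you would ``invoke the upper bound on $\mu(G)$ available for monolithic primitive groups of this wreath-product shape.'' No such upper bound is available: the result you cite from \cite{mingen}, Proposition~4, is the inequality $\mu(G)\geq \mu(X)$, which goes in the wrong direction, and there is no companion bound $\mu(G)\leq \mu(X)$ to invoke (if there were, the hypothesis $n=2^t$ would be unnecessary and the paper would not need a page-long argument). Your alternative phrasing --- ``one argues directly that any independent generating set of $G$ has size at most $3$,'' carried out by ``tracking how $a$ propagates around the $n$-cycle'' --- is a restatement of the goal, not an argument. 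What is actually needed, and what the paper supplies, is the following chain of ideas, none of which appears in your proposal: given an independent generating set $\{g_1,\dots,g_r\}$, normalize it so that (after reordering, multiplying by powers of $g_1$, and conjugating by a carefully chosen element $m\in N$) it becomes $\{y_1,\dots,y_r\}$ with $y_1=\sigma(b,1,\dots,1)$, $b\in\aut(S)\setminus S$, and $y_2,\dots,y_r\in N$; use $m(\aut S)=3$ to extract three elements $y_1,y_i,y_j$ whose coordinate entries generate $\aut(S)$; show that $H:=\langle y_1,y_i,y_j\rangle$ has $H\cap N$ subdirect in $N=S_1\times\dots\times S_n$, hence of the form $\Delta(\Phi,\alpha)$, a product of full diagonal subgroups over an imprimitivity partition of $\langle\sigma\rangle$ with block size $d=2^\delta$; and finally derive from the normalization of $D_1$ by $y_1^c$ the congruence $(\phi_1\phi_d^{-1})^d\equiv b^{d-1}\bmod S$, which for $d$ even would force $b\in S$. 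This parity argument in $\aut(S)/S\cong C_2$ is exactly where $n=2^t$ enters (it forces every block size $d$ to be a $2$-power, hence $d=1$), giving $H\cap N=N$, $H=G$, and so $r\leq 3$ by independence. Your proposal correctly guesses \emph{where} the $2$-power hypothesis must be used, but supplies no mechanism for it.

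A smaller point: your claim that $G/N$ is cyclic of order $n=2^t$ because ``modulo $N$ the $\aut(S)$-entries collapse'' is wrong. Since $\gamma^n=(a,\dots,a)$ and $a\notin S$, this element lies outside $N$, so $G/N$ is cyclic of order $2^{t+1}$, as the paper notes. This does not affect $m(G/N)=1$, but it is the kind of computation your ``propagation'' step would have to get right, and getting it wrong here suggests the direct argument you defer is not actually in hand.
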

In particular, this gives infinitely many examples of non-simple, non-soluble groups $G$ with $m(G)=d(G)+1$ in Theorem~\ref{uno}.  Similar examples can be obtained replacing $\mathrm{Aut}(\mathrm{PSL}_2(7))$ with $\mathrm{P}\Sigma\mathrm{L}_2(9)$ 
or $M_{10}$.
\begin{proof}
Suppose $n=2^t$, for some positive integer $t$. Let $r:=m(G)$; we aim to prove that $r=3$.

 Let
$\{g_1,\dots,g_r\}$ be an independent generating set of $G.$ Observe that 
$$\gamma^n=(a,\ldots,a)\in G\setminus N$$ and hence  $G/N$ is cyclic of order $2^{t+1}.$
Therefore, relabelling the elements of the independent generating set if necessary, we may assume $G=\langle g_1, N\rangle$.
Hence $g_1=\sigma(as_1,s_2,\dots,s_n)$ with $s_1,\dots,s_n\in S.$ Moreover, for $2\leq i\leq r,$ there exists $u_i\in \mathbb Z$ such that $g_ig_1^{u_i}\in N$. Observe that $\{g_1,g_2g_1^{u_2},\dots,g_rg_1^{u_r}\}$ is still an independent generating set having cardinality $r$.

 Let $$m=(s_2\cdots s_n, s_3\cdots s_n,\dots,s_{n-1}s_n,s_n,1) \in N.$$
Then $Y=\{g_1^m,(g_2g_1^{u_2})^m,\dots,(g_rg_1^{u_r})^m\}$ is another independent generating set for $G$ having cardinality $r$. 
We have
$$y_1:=g_1^m=\sigma(b,1,\dots,1),$$ with $b=as_1\cdots s_n \in \aut S\setminus S,$ and, for $2\leq i\leq r,$ there exist $s_{i1},\dots s_{in}\in S$ such that
$$y_i:=(g_ig_1^{u_i})^m=(s_{i1},\dots,s_{in}).$$

Now let $Z:=\{b, s_{ij} \mid 2\leq i\leq r, 1\leq j\leq n\}$ and $T=\langle Z\rangle.$ Since $G=\langle y_1,\dots,y_t\rangle \leq T \wr \langle \sigma \rangle,$ we must have $\aut(S)=T.$ On the other hand
$m(\aut(S))=3,$ so $\aut(S)=\langle b, s_{iu}, s_{jv}\rangle,$ for suitable $2\leq i,j\leq r$ and $2\leq u, v\leq n.$ 

Let $H:=\langle y_1,  y_i, y_j\rangle$ and, for $1\leq k\leq n,$ consider the projection $\pi_k: N\to S$ sending $(s_1,\dots,s_n)$ to $s_k.$ Notice that $\pi_1(y_1^n)=b,$ $\pi_1((y_i)^{y_1^{1-u}})=s_{iu},$ $\pi_1((y_j)^{y_1^{1-v}})=s_{jv}.$ 
In particular $\pi_1(H\cap N)=S$ and $H\cap N$ is a subdirect product of 
$N= S_1\times \dots \times S_n$.

Recall that a subgroup $D$ of $N=S_1\times \dots \times S_n$ is call full diagonal  if each projection $\pi_i: D\to S_i$ is an isomorphism.
To each pair $(\Phi,\alpha)$ where $\Phi=\{B_1,\dots,B_c\}$ is a partition of the set $\{1,\dots,n\}$
and  $\alpha=(\alpha_1,\dots,\alpha_n)\in (\aut S)^n$, we associate a direct product
$\Delta(\Phi,\alpha)=D_1\times \dots \times D_c$ where each $D_j=\{(x^{\alpha_{i_1}},\dots, x^{\alpha_{i_d}})\mid x\in S\}$ is a full diagonal subgroup of the direct product $S_{i_1} \times \dots \times S_{i_d}$ corresponding to the block $B_j=\{i_1,\dots,i_d\}$ in $\Phi.$ 

Since $H\cap N$ is a subdirect product of $N,$ we must have $H\cap N=\Delta(\Phi,\alpha)$ for a suitable choice of the pair $(\Phi,\alpha).$ As $G=\langle H,N\rangle$, the action by conjugation of $H$ on $\{S_1,\ldots,S_n\}$ is transitive and hence the partition $\{B_1,\dots,B_c\}$ correspond to an imprimitive system for the permutation action of $\langle\sigma\rangle$ on $\{1,\dots,n\}.$
So there exist $c=2^\gamma$ and $d=2^\delta$ with $c\cdot d=n$ 
such that $$B_i:=\{i,i+c,i+2c,\dots,i+(d-1)c\} \text { for $1\leq i \leq c$.}$$
Notice that $y_1\in H$ normalizes $\Delta(\Phi,\alpha)$. In particular, $y_1^c$ normalizes $\Delta(\Phi,\alpha).$ However $y_1^c$ normalizes $L=S_1\times S_{1+c}\times \dots \times S_{1+(d-1)c}$ and acts on $L$ as $\pi \cdot l,$ 
where $\pi$ is the $d$-cycle $(1,1+c,\dots,1+(d-1)c)$ and $l=(b,1,\dots,1)\in L.$ 
In particular $\pi\cdot l$ normalizes the full diagonal subgroup $D_1$ of $L$. Therefore, setting $\phi_i=\alpha_{1+(i-1)c}$, for every $s\in S$, there exists $t\in T$ such that
	$$(s^{\phi_db},s^{\phi_1},s^{\phi_2},\ldots,s^{\phi_{d-1}})=
	(t^{\phi_1},t^{\phi_2},t^{\phi_3
	},\ldots,t^{\phi_{d}}).$$ It follows
	$$\begin{aligned}
	\phi_{d}b\phi_1^{-1}\phi_2&=\phi_1,\\
		\phi_{d}b\phi_1^{-1}\phi_3&=\phi_{2},\\
		\cdots\\
		\phi_{d}b\phi_1^{-1}\phi_d&=\phi_{d-1}.
	\end{aligned}$$
In particular $(\phi_1\phi_d^{-1})^d\equiv b^{d-1}\! \mod S.$ If $d$ is even, then 
$b\in \langle x^2\mid x\in \aut(S)\rangle=S,$ against our assumption. Thus $d=1$ and hence $c=n$. However, this implies $H\cap N=N$ and consequently $H=G.$ Thus $m(G)=r\leq 3$. On the other hand $m(G)\geq 3$ by Theorem \ref{bound}. So we conclude $m(G)=3.$
\end{proof}
\subsection{Soluble groups: examples for Theorem~\ref{thrm:sol}}
We give three elementary examples, but with the same ideas one can construct more complicated examples.

Let $S_n$ be the symmetric group of degree $n$ and let $C_n$ be the cyclic group of order $n$.

Now the group $G:=S_3 \times C_2^t = C_3 : C_2^{t+1}$ with $t\ge 1$ satisfies $d(G)=t+1$ and $m(G)=t+2$. This gives examples of groups satisfying~\eqref{sol1} in Theorem~\ref{thrm:sol}.

The group $G:=S_4=K:S_3$ with $K$ the Klein subgroup of $S_4$ and the group $G:=(C_3^t : C_2) \times C_2$ with $C_2$ acting on  $C_3^t$ by inversion also satisfy $m(G)=d(G)+1$. These two examples show groups satisfying~\eqref{sol2} in Theorem~\ref{thrm:sol} with $m(H)=2$ in the first case and with $H$ abelian in the second case.

As above, let $K$ be the Klein subgroup of $S_4$ and let  $G:=K:(S_3\times C_2^{t-1})$. This gives examples of groups satisfying~\eqref{sol3} in Theorem~\ref{thrm:sol}.


\begin{thebibliography}{99}
	
	\bibitem{ak} P. Apisa and B. Klopsch, 
	A generalization of the Burnside basis theorem,
	\textit{J. Algebra} {\bf{400}} (2014), 8--16.
	
\bibitem{ag} {M. Aschbacher and R.  Guralnick},  Some applications of the first cohomology group, \textit{J. Algebra} {\bf 90} (1984), no. 2, 446--460.




\bibitem{bs} S. Burris and H. P.  Sankappanavar,
{\em A course in universal algebra},
Graduate Texts in Mathematics, {\bf{78}}, Springer-Verlag, New York-Berlin, 1981. 


\bibitem{dv-l}
{F. Dalla Volta and A. Lucchini}, {Finite groups that need more generators
	than any proper quotient}, \textit{J. Austral.
	Math. Soc., Ser. A} {\bf 64} (1998), 82--91.

\bibitem{dl} E. Detomi and A. Lucchini, Probabilistic generation of finite groups with a unique minimal normal subgroup, \textit{J. Lond. Math. Soc.} (2) {\bf 87} (2013), no. 3, 689--706.

\bibitem{dv-l-2}F. Dalla Volta and A. Lucchini, The smallest group with non-zero presentation rank, \textit{J. Group Theory} {\bf 2} (1999), no. 2, 147--155.

\bibitem{sg} S. P. Glasby,
Classifying uniformly generated groups,
Comm. Algebra {\bf{48}} (2020), no. 1, 101--104.

\bibitem{ku} S. Kohl, A bound on the order of the outer automorphism group of a
finite simple group of given order, online note (2003), available under
\texttt{http://www.gap-system.org/DevelopersPages/StefanKohl/preprints/outbound.pdf}.

\bibitem{min} A. Lucchini, The largest size of a minimal generating set of a finite group, \textit{Arch. Math.} {\bf 101} (1) (2013) 1–8. 

\bibitem{mingen} A. Lucchini,  Minimal generating sets of maximal size in finite monolithic groups, \text{Arch. Math.} (Basel) {\bf 101} (2013), no. 5, 401--410. 

\bibitem{uni} A. Lucchini and F. Menegazzo,  Generators for finite groups with a unique minimal normal subgroup, \textit{Rend. Sem. Mat. Univ. Padova} {\bf 98} (1997), 173--191.

\bibitem{ap} A. Lucchini and P. Spiga,  Independent sets of generators of prime power order, \textit{Expo. Math.} {\bf 40} (2022), no. 1, 140--154. 

\bibitem{paz}
{P. Jim\'enez-Seral and J. Lafuente}, On complemented nonabelian chief factors of a finite
group,  \textit{Israel J. Math.}
{\bf 106} (1998), 177--188.

\bibitem{andrea2}
A. Lucchini, Generating wreath products and their augmentation ideals, \textit{Rend. Sem. Mat. Univ. Padova} {\bf 98} (1997), 67--87.

\bibitem{stam} U. Stammbach, Cohomological characterisations of finite solvable and nilpotent groups, \textit{J. Pure Appl.
Algebra} {\bf 11} (1977/78) 293--301.

\bibitem{sawhi} J. Whiston and J. Saxl, On the maximal size of independent generating sets of $\mathrm{PSL}_2(q),$ {\em J.
	Algebra},  {\bf{258}} no. 2 (2002) 651--657.

\end{thebibliography}
\end{document}